\journal{Journal of Mathematical Analysis and Applications}
\newtheorem{theorem}{Theorem}[section]
\newtheorem{proposition}[theorem]{Proposition}
\newtheorem{definition}[theorem]{Definition}
\newtheorem{corollary}[theorem]{Corollary}
\newtheorem{lemma}[theorem]{Lemma}
\newtheorem{remark}[theorem]{Remark}
\makeatletter \@addtoreset{equation}{section} \makeatother
\newcommand{\N}{\mathbb{N}}
\newcommand{\R}{\mathbb{R}}
\newcommand{\PP}{\mathbb{P}}
\newcommand{\bb}[1]{\boldsymbol{#1}}
\begin{document}

\begin{frontmatter}

    \title{Complete monotonicity of multinomial probabilities and its application to Bernstein estimators on the simplex}

    \author[a1]{Fr\'ed\'eric Ouimet\corref{cor1}\fnref{fn1}}

    \address[a1]{2920, chemin de la Tour, Universit\'e de Montr\'eal, Montr\'eal, QC H3T 1J8, Canada.}

    \cortext[cor1]{Corresponding author}
    \ead{ouimetfr@dms.umontreal.ca}

    \fntext[fn1]{F. Ouimet is supported by a NSERC Doctoral Program Alexander Graham Bell scholarship.}

    \begin{abstract}
        Let $d\in \N$ and let $\gamma_i\in [0,\infty)$, $x_i\in (0,1)$ be such that $\sum_{i=1}^{d+1} \gamma_i  = M\in (0,\infty)$ and $\sum_{i=1}^{d+1} x_i = 1$.
        We prove that
        \begin{equation*}
            a \mapsto \frac{\Gamma(aM + 1)}{\prod_{i=1}^{d+1} \Gamma(a \gamma_i + 1)} \prod_{i=1}^{d+1} x_i^{a\gamma_i}
        \end{equation*}
        is completely monotonic on $(0,\infty)$.
        This result generalizes the one found by \cite{MR3730425} for binomial probabilities ($d=1$).
        As a consequence of the log-convexity, we obtain some combinatorial inequalities for multinomial coefficients.
        We also show how the main result can be used to derive asymptotic formulas for quantities of interest in the context of statistical density estimation based on Bernstein polynomials on the $d$-dimensional simplex.
    \end{abstract}

    \begin{keyword}
        multinomial probability \sep complete monotonicity \sep Gamma function \sep combinatorial inequalities \sep Bernstein polynomials \sep simplex
        \MSC[2010]{Primary : 60C05 \sep Secondary : 62G05 \sep 62G07 \sep 33B15}
    \end{keyword}

\end{frontmatter}

\section{Introduction}\label{sec:intro}

For any $d\in \N$, let $[d] \circeq \{1,2,\ldots,d\}$. For any $\bb{v} \circeq (v_1,v_2,\ldots,v_d)\in \R^d$, write $\|\bb{v}\| \circeq \sum_{i=1}^d |v_i|$.
Denote the $d$-dimensional simplex and its interior by
\begin{equation*}
    \mathcal{S} \circeq \big\{\bb{x}\in [0,1]^d : \|\bb{x}\| \leq 1\big\} \quad \text{and} \quad \text{Int}(\mathcal{S}) \circeq \big\{\bb{x}\in (0,1)^d : \|\bb{x}\| < 1\big\}.
\end{equation*}
Given a random sample $\bb{y}_1,\bb{y}_2,\ldots,\bb{y}_n$ on $\mathcal{S}$ from some unknown distribution $F$, define the Bernstein estimator on the simplex
\begin{equation}\label{eq:Bernstein.estimator}
    \hat{F}_{m,n}(\bb{x}) \circeq \sum_{\bb{k}\in \N_0^d :\|\bb{k}\|\leq m} F_n(\bb{k}/m) P_{\bb{k},m}(\bb{x}), \quad \bb{x}\in \mathcal{S},
\end{equation}
where $m,n\in \N$, $F_n(\bb{y}) \circeq \frac{1}{n} \sum_{j=1}^n 1_{\{\bb{y} \leq \bb{y}_j\}}$ is the empirical cumulative distribution function, $x_{d+1} \circeq 1 - \|\bb{x}\|$, $k_{d+1} \circeq m - \|\bb{k}\|$, and
\begin{equation}\label{eq:multinomial.probability}
    P_{\bb{k},m}(\bb{x}) \circeq \frac{m!}{\prod_{i=1}^{d+1} k_i!} \prod_{i=1}^{d+1} x_i^{k_i}.
\end{equation}

Our first goal is to prove that $a\mapsto P_{a\bb{k},a m}(\bb{x})$ is completely monotonic on $(0,\infty)$, see Definition \ref{def:complete.monotonicity} below.
In fact, we prove a slightly more general statement in Theorem \ref{thm:completely.monotonic}. From the log-convexity, we deduce some combinatorial inequalities for multinomial coefficients in Section \ref{sec:combinatorial.inequalities}. The proof of the theorem and the combinatorial inequalities follow very closely, and generalize, the work of \cite{MR3730425}. In Section \ref{sec:application.Bernstein.polynomials.on.simplex}, we show how Theorem \ref{thm:completely.monotonic} can be used to prove asymptotic formulas for quantities of interest related to \eqref{eq:Bernstein.estimator}. To our knowledge, the statistical properties (bias, variance, mean integrated squared error, etc.) of the estimator in \eqref{eq:Bernstein.estimator} (and the associated density estimator, see e.g.\hspace{-0.3mm} \cite{MR2270097,MR2662607}) have never been studied when $d > 1$, except for the pointwise mean squared error of the density estimator in \cite{MR1293514} when $d = 2$.
This was our motivation for this article.

\begin{definition}[Complete monotonicity]\label{def:complete.monotonicity}
    A non-constant function $a \mapsto g(a)$ is said to be completely monotonic on $(0,\infty)$, if $g$ has derivatives of all orders and satisfies
    \begin{equation}\label{eq:def:complete.monotonicity}
        (-1)^n g^{(n)}(a) > 0, \quad \text{for all } n\in \N_0, ~a\in (0,\infty).
    \end{equation}
\end{definition}

\begin{remark}
    Inequality \eqref{eq:def:complete.monotonicity} is usually not strict when defining complete monotonicity, but non-constant functions that satisfy the non-strict version of \eqref{eq:def:complete.monotonicity} automatically satisfy the strict version, see \cite[p.98]{MR0000436} for the original proof or \cite[p.395]{MR1421454} for a simpler proof.
\end{remark}

We will need the two following lemmas during the proof of Theorem \ref{thm:completely.monotonic}.

\begin{lemma}\label{lem:Alzer.2018.lemma.2}
    Let $g : (0,\infty) \to (0,1)$.
    If $(-\log g)'$ is completely monotonic on $(0,\infty)$, then $g$ is completely monotonic on $(0,\infty)$.
\end{lemma}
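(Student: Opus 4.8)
The plan is to reduce everything to the single first-order identity linking $g$ to its logarithmic derivative, and then to propagate complete monotonicity upward by induction on the order of differentiation. Write $\phi \circeq (-\log g)' = -g'/g$. By hypothesis $\phi$ is completely monotonic on $(0,\infty)$, so it is smooth and $(-1)^j \phi^{(j)}(a) > 0$ for every $j\in \N_0$; in particular $g$ is itself smooth, being determined by $g' = -\phi\, g$ together with $g>0$. Rearranged, the governing relation reads
\[
    g'(a) = -\phi(a)\, g(a), \qquad a\in (0,\infty),
\]
and the whole argument rests on differentiating this identity.

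I would then prove the claim $(-1)^n g^{(n)}(a) > 0$ for all $n\in \N_0$ by strong induction on $n$. The base case $n=0$ is exactly the hypothesis $g>0$ (recall $g$ maps into $(0,1)$). For the inductive step, apply the Leibniz rule to $g^{(n)} = (g')^{(n-1)} = -(\phi\, g)^{(n-1)}$, obtaining
\[
    g^{(n)} = -\sum_{j=0}^{n-1} \binom{n-1}{j} \phi^{(j)}\, g^{(n-1-j)}.
\]
The crux is then a short sign-bookkeeping computation: multiplying by $(-1)^n$ and using $(-1)^n = -(-1)^{j}(-1)^{n-1-j}$ distributes the overall sign onto the two factors, giving
\[
    (-1)^n g^{(n)} = \sum_{j=0}^{n-1} \binom{n-1}{j} \big[(-1)^{j}\phi^{(j)}\big]\big[(-1)^{n-1-j} g^{(n-1-j)}\big].
\]
Here each $(-1)^{j}\phi^{(j)} > 0$ by complete monotonicity of $\phi$, and each $(-1)^{n-1-j} g^{(n-1-j)} > 0$ by the induction hypothesis (since $0 \le n-1-j \le n-1$); as the binomial coefficients are positive and the sum is nonempty for $n\ge 1$, the right-hand side is strictly positive. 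This closes the induction and delivers the strict inequalities of Definition \ref{def:complete.monotonicity}, with non-constancy inherited from that of $\phi$.

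I expect the only delicate point to be the sign bookkeeping in the displayed step — verifying that the factor $(-1)^n$ splits precisely as $-(-1)^{j}(-1)^{n-1-j}$, so that both bracketed factors acquire the "alternating-sign-times-derivative'' form to which the hypotheses on $\phi$ and the induction hypothesis apply verbatim. Conceptually, this induction is nothing more than Faà di Bruno's formula for $g = e^{-h}$ with $h \circeq -\log g$ a nonnegative function whose derivative is completely monotonic (a Bernstein function), composed with the completely monotonic map $t\mapsto e^{-t}$; but routing the computation through the Leibniz recursion above keeps the proof entirely elementary and self-contained.
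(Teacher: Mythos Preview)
Your proof is correct, and it takes a genuinely different route from the paper. The paper observes that $g = f\circ h$ with $f(y)=e^{-y}$ completely monotonic and $h=-\log g$ a positive function whose derivative is completely monotonic (a Bernstein function), and then simply invokes a standard composition theorem (Theorem~2 in \cite{MR1872377}) to conclude. Your argument instead stays self-contained: from $g'=-\phi g$ you propagate the sign pattern by strong induction via the Leibniz rule, which is exactly the hands-on verification underlying that composition result in this special case. What the paper's route buys is brevity and a conceptual pointer to the Bernstein-function framework; what yours buys is that no external reference is needed, and in fact your induction never uses $g<1$ (only $g>0$), so it proves a marginally stronger statement than the lemma as phrased. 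Your closing remark already identifies the connection between the two viewpoints accurately.
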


\begin{proof}
    Take $f : (0,\infty) \to (0,1) : y \mapsto e^{-y}$ and $h : (0,\infty) \to (0,\infty) : x \mapsto - \log g(x)$.
    Since $h$ is positive and $h' = (-\log g)'$ is completely monotonic by hypothesis, then $g = f \circ h$ is completely monotonic by Theorem 2 in \cite{MR1872377}.
\end{proof}

\begin{lemma}\label{lem:Alzer.2018.lemma.1.generalization}
    If $\bb{u} \circeq (u_1,u_2,\ldots,u_d)\in \text{Int}(\mathcal{S})$, $u_{d+1} \circeq 1 - \|\bb{u}\| > 0$ and $y > 1$, then
    \begin{equation}\label{eq:lem:Alzer.2018.lemma.1.generalization}
        J_{\bb{u}}(y) \circeq \frac{1}{y - 1} - \sum_{i=1}^{d+1} \frac{1}{y^{1/u_i} - 1} > 0.
    \end{equation}
\end{lemma}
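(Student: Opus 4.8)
The plan is to first reduce \eqref{eq:lem:Alzer.2018.lemma.1.generalization} to a statement about a single real parameter. Setting $t = \log y > 0$ and writing $g(s) = 1/(e^s - 1)$ for $s > 0$, the identity $y^{1/u_i} = e^{t/u_i}$ turns the claim $J_{\bb{u}}(y) > 0$ into
\begin{equation*}
    g(t) > \sum_{i=1}^{d+1} g(t/u_i),
\end{equation*}
where $u_1, \ldots, u_{d+1} > 0$ satisfy $\sum_{i=1}^{d+1} u_i = 1$. The key observation is that the right-hand side evaluates the function $F(p) = g(t/p)$ at points summing to $1$, while the left-hand side is $g(t) = F(1) = F\big(\sum_{i=1}^{d+1} u_i\big)$. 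Thus the inequality will follow from a strict superadditivity property of $F$ on $(0,\infty)$.

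To establish superadditivity, I would show that $F$ extends continuously to $[0,\infty)$ with $F(0) = 0$ (since $t/p \to \infty$ forces $g(t/p) \to 0$ as $p \to 0^+$) and that $F$ is strictly convex on $(0,\infty)$. Granting these two facts, for any $a, b > 0$ one writes $a$ and $b$ as nontrivial convex combinations of $0$ and $a+b$; strict convexity together with $F(0)=0$ gives $F(a) < \tfrac{a}{a+b}F(a+b)$ and $F(b) < \tfrac{b}{a+b}F(a+b)$, and adding yields $F(a) + F(b) < F(a+b)$. A straightforward induction on the number of summands (valid since $d+1 \geq 2$) then gives $\sum_{i=1}^{d+1} F(u_i) < F\big(\sum_{i=1}^{d+1} u_i\big) = F(1) = g(t)$, which is exactly the desired inequality.

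The main technical step, and the one I expect to be the chief obstacle, is the strict convexity of $F$. Writing $s = t/p$ and differentiating twice through the change of variables, one finds $F''(p) = \tfrac{t}{p^3}\big[s\,g''(s) + 2g'(s)\big]$, so the sign of $F''(p)$ reduces to the sign of $2 g'(s) + s\, g''(s)$. Using $g'(s) = -e^s/(e^s-1)^2$ and $g''(s) = e^s(e^s+1)/(e^s-1)^3$ and clearing the positive factor $e^s/(e^s-1)^3$, this in turn reduces to the positivity of the auxiliary function
\begin{equation*}
    \Phi(s) = (s-2)e^s + s + 2, \quad s > 0.
\end{equation*}
This last claim is elementary: $\Phi(0) = \Phi'(0) = 0$ while $\Phi''(s) = s\,e^s > 0$ on $(0,\infty)$, so $\Phi'$ is strictly increasing from $0$ and hence positive, whence $\Phi$ is strictly increasing from $0$ and hence $\Phi(s) > 0$ for all $s > 0$. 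This gives $F'' > 0$ throughout $(0,\infty)$ and closes the argument. I would note that, as in the binomial case ($d=1$) treated by \cite{MR3730425}, the essential content is packaged into the single-variable convexity estimate, after which the multivariate structure enters only through the superadditivity induction.
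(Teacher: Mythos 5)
Your proof is correct, but it takes a genuinely different route from the paper. The paper proves the lemma by induction on $d$: it invokes Lemma 1 of \cite{MR3730425} (the $d=1$ case) as a black box to split $\tfrac{1}{y-1}$ against the pair $\|\bb{u}\|$, $1-\|\bb{u}\|$, and then rescales via $z = y^{1/\|\bb{u}\|}$, $v_i = u_i/\|\bb{u}\|$ to absorb the remaining $d$ terms into the induction hypothesis. Your argument instead handles all $d$ at once and is self-contained: you recast the inequality as strict superadditivity of $F(p) = 1/(e^{t/p}-1)$ over a partition of unity, and reduce that to strict convexity of $F$ together with $F(0^+)=0$. I checked the computations: $g''(s) = e^s(e^s+1)/(e^s-1)^3$, the identity $F''(p) = \tfrac{s}{p^2}\bigl[s\,g''(s)+2g'(s)\bigr]$ (equivalent to your $\tfrac{t}{p^3}$ prefactor since $t = sp$), and the reduction to $\Phi(s) = (s-2)e^s+s+2 > 0$ via $\Phi(0)=\Phi'(0)=0$, $\Phi''(s)=s e^s>0$ are all correct, and the passage from strict convexity on $(0,\infty)$ plus continuity at $0$ to strict superadditivity is the standard chord argument you describe. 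What your approach buys is independence from the external $d=1$ lemma and a conceptually transparent mechanism (convexity $\Rightarrow$ superadditivity) that treats all components symmetrically; what the paper's approach buys is brevity, since all analytic work is delegated to the cited result and only a two-line rescaling is needed for the induction step. One stylistic remark: since your convexity computation in particular reproves the $d=1$ case, you could present the lemma without any reference to \cite{MR3730425} at all.
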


\begin{proof}
    Lemma 1 in \cite{MR3730425} proves \eqref{eq:lem:Alzer.2018.lemma.1.generalization} in the case $d = 1$.
    Fix $d\geq 2$ and assume that \eqref{eq:lem:Alzer.2018.lemma.1.generalization} is true for any smaller integer.
    Let $y > 1$. By Lemma 1 in \cite{MR3730425},
    \begin{equation}
        \frac{1}{y - 1} - \frac{1}{y^{1/\|\bb{u}\|} - 1} - \frac{1}{y^{1/(1 - \|\bb{u}\|)} - 1} > 0.
    \end{equation}
    Therefore, \eqref{eq:lem:Alzer.2018.lemma.1.generalization} will follow if we can show that
    \begin{equation}\label{eq:lem:Alzer.2018.lemma.1.generalization.sufficient.equation}
        \frac{1}{y^{1/\|\bb{u}\|} - 1} - \sum_{i=1}^d \frac{1}{y^{1/u_i} - 1} > 0.
    \end{equation}
    Simply define $z \circeq y^{1/\|\bb{u}\|}$ and $v_i \circeq u_i / \|\bb{u}\|$, then \eqref{eq:lem:Alzer.2018.lemma.1.generalization.sufficient.equation} is equivalent to
    \begin{equation}
        \frac{1}{z - 1} - \sum_{i=1}^d \frac{1}{z^{1/v_i} - 1} > 0,
    \end{equation}
    which is true by the induction hypothesis.
\end{proof}

\section{Main result}\label{sec:main.result}

Below is a generalization of the theorem in \cite{MR3730425}.

\begin{theorem}\label{thm:completely.monotonic}
    For any $d\in \N$, $M\in (0,\infty)$, $\bb{x}\in \text{Int}(\mathcal{S})$, $x_{d+1} \circeq 1 - \|\bb{x}\| > 0$, and any $\bb{\gamma}\in [0,\infty)^d$ such that $\|\bb{\gamma}\| \leq M$ and $\gamma_{d+1} \circeq M - \|\bb{\gamma}\| \geq 0$, the function
    \begin{equation}\label{eq:thm:completely.monotonic}
        g(a) \circeq \frac{\Gamma(aM + 1)}{\prod_{i=1}^{d+1} \Gamma(a \gamma_i + 1)} \prod_{i=1}^{d+1} x_i^{a\gamma_i}
    \end{equation}
    is completely monotonic on $(0,\infty)$.
\end{theorem}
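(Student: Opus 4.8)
The plan is to use Lemma~\ref{lem:Alzer.2018.lemma.2}, which reduces the problem of showing that $g$ is completely monotonic to showing that $(-\log g)'$ is completely monotonic. Let me sketch how I would set this up.

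First I would take logarithms to linearize the product structure. Writing
\begin{equation*}
    -\log g(a) = \sum_{i=1}^{d+1} \log \Gamma(a\gamma_i + 1) - \log \Gamma(aM + 1) - \sum_{i=1}^{d+1} a\gamma_i \log x_i,
\end{equation*}
I would differentiate once. Using the digamma function $\psi = (\log\Gamma)'$, this gives
\begin{equation*}
    (-\log g)'(a) = \sum_{i=1}^{d+1} \gamma_i \psi(a\gamma_i + 1) - M\,\psi(aM + 1) - \sum_{i=1}^{d+1} \gamma_i \log x_i.
\end{equation*}
To verify the hypothesis of Lemma~\ref{lem:Alzer.2018.lemma.2}, I must confirm two things: that $g$ takes values in $(0,1)$, and that $(-\log g)'$ is completely monotonic on $(0,\infty)$. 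The first should follow because $g(a) = P_{a\bb{\gamma},aM}(\bb{x})$ is a multinomial-type probability bounded above by $1$ (and strictly below $1$ since $\bb{x}\in\text{Int}(\mathcal{S})$ with at least two nonzero coordinates), so I would note this and move on.

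The substance is in showing $(-\log g)'$ is completely monotonic. The standard tool here is the integral representation of the digamma function, namely $\psi(z+1) = -\gamma_E + \int_0^\infty \frac{e^{-t} - e^{-zt}}{1 - e^{-t}}\,dt$ (or the equivalent $\psi'(z) = \int_0^\infty \frac{t\,e^{-zt}}{1-e^{-t}}\,dt$ and higher derivatives). I would differentiate $(-\log g)'$ a further $n$ times: the constant term $-\sum_i \gamma_i\log x_i$ and the $-\gamma_E$ contributions drop out, leaving a combination of polygamma functions. Substituting the integral representations, I expect each derivative to collapse into a single integral of the form
\begin{equation*}
    (-1)^n \big((-\log g)'\big)^{(n)}(a) = \int_0^\infty t^{n}\, e^{-t}\, H_{\bb{\gamma}}(t; a)\, dt
\end{equation*}
for some kernel $H_{\bb{\gamma}}$, where the positivity of the whole expression reduces to the positivity of that kernel. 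The key is to arrange the algebra so that, after a change of variables $t \mapsto$ something like $e^{-t} = 1/y$ (turning exponentials into powers), the kernel becomes exactly the expression $J_{\bb{u}}(y)$ from Lemma~\ref{lem:Alzer.2018.lemma.1.generalization}, with $u_i = \gamma_i/M$ playing the role of the simplex point. That lemma then delivers the required positivity.

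The main obstacle I anticipate is the bookkeeping that matches the derived kernel to the precise form $\tfrac{1}{y-1} - \sum_i \tfrac{1}{y^{1/u_i}-1}$ of Lemma~\ref{lem:Alzer.2018.lemma.1.generalization}. Concretely, the term from $\psi(aM+1)$ carries the weight $M$ and must, after the substitution, produce the single $\tfrac{1}{y-1}$ term, while each $\psi(a\gamma_i+1)$ term must produce $\tfrac{1}{y^{1/u_i}-1}$ with $u_i = \gamma_i/M$; getting the exponents and the overall factors to line up — and correctly handling any coordinates with $\gamma_i = 0$, which should contribute nothing — is where the care is needed. Once the kernel is identified with $J_{\bb{u}}$ and shown positive, complete monotonicity of $(-\log g)'$ follows, and Lemma~\ref{lem:Alzer.2018.lemma.2} finishes the proof that $g$ is completely monotonic.
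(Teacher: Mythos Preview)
Your overall strategy matches the paper's: set $h=-\log g$, use the integral representation of $\psi'$ to write $h''(a)=-\int_0^\infty s\,e^{-as}\,J_{\bb{\gamma}/M}(e^{s/M})\,ds$, and invoke Lemma~\ref{lem:Alzer.2018.lemma.1.generalization} to get $(-1)^n h^{(n+1)}(a)>0$ for all $n\geq 1$. That part is fine.

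The gap is the $n=0$ case of the complete monotonicity of $h'$, namely $h'(a)>0$. Your integral-kernel argument only kicks in after at least one further differentiation (as you yourself note, the constants ``drop out''), so it says nothing about the sign of $h'$ itself. You try to sidestep this by asserting $g(a)<1$ on the grounds that $g$ is ``a multinomial-type probability'', but that interpretation is only valid when the $a\gamma_i$ are nonnegative integers; for general $a>0$ there is no multinomial expansion to appeal to. Worse, $g(a)<1$ is equivalent to $h(a)>0$, which (since $h(0)=0$) would follow from $h'>0$ --- exactly the thing you have not shown --- so the argument is circular.

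The paper closes this gap as follows. From the $n=1$ case one already knows $h'$ is strictly decreasing, so it suffices to show $\lim_{a\to\infty} h'(a)\geq 0$. Using $\psi(z+1)=\psi(z)+1/z$ and $\psi(z)\sim\log z$, one computes
\[
\lim_{a\to\infty} h'(a)=M\sum_{i=1}^{d+1}\frac{\gamma_i}{M}\log\!\left(\frac{\gamma_i/M}{x_i}\right),
\]
which is a Kullback--Leibler divergence and hence $\geq 0$ by Jensen's inequality. This is the missing idea in your sketch; once you add it, Lemma~\ref{lem:Alzer.2018.lemma.2} applies and the proof is complete.
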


\begin{remark}
    In the proof of Theorem \ref{thm:completely.monotonic}, we will show that $(-\log g)'$ is completely monotonic on $(0,\infty)$, which is a stronger statement by Lemma \ref{lem:Alzer.2018.lemma.2}.
\end{remark}

\begin{remark}
    Soon after the first version of the present paper was posted on arXiv.org, \cite{hal-01769288} gave an alternative proof of the complete monotonicity of $(-\log g)'$ and rewrote the combinatorial inequalities of Section \ref{sec:combinatorial.inequalities} in terms of multivariate beta functions.
\end{remark}

\begin{proof}
    Let $M\in (0,\infty)$, $\bb{x}\in \text{Int}(\mathcal{S})$ and $a > 0$.
    The theorem in \cite{MR3730425} proves our statement in the case $d = 1$ (when the components of $\bb{\gamma}$ are integers, but the adjustment is trivial).
    Therefore, fix $d\geq 2$ and assume that the theorem is true for any smaller integer.
    If there exists $i\in [d+1]$ such that $\gamma_i = 0$, the theorem reduces to proving that \eqref{eq:thm:completely.monotonic} is completely monotonic for a $d$ that is smaller then the one that we previously fixed, which is true by the induction hypothesis.
    Thus, assume for the remainder of the proof that
    \begin{equation}\label{eq:prop:completely.monotonic.hypothesis.k.i}
        \gamma_i > 0, \quad \text{for all } i\in [d+1].
    \end{equation}

    Define
    \begin{equation}\label{eq:prop:completely.monotonic.h}
        h(a) \circeq -\log g(a) = -\log \Gamma(aM + 1) + \sum_{i=1}^{d+1} \log \Gamma(a \gamma_i + 1) - a \sum_{i=1}^{d+1} \gamma_i \log x_i.
    \end{equation}
    Then,
    \begin{equation}\label{eq:prop:completely.monotonic.derivative.h}
        h'(a) = - M\psi(aM+1) + \sum_{i=1}^{d+1} \gamma_i \psi(a \gamma_i + 1) - \sum_{i=1}^{d+1} \gamma_i \log x_i,
    \end{equation}
    where $\psi \circeq (\log \Gamma)' = \Gamma' / \Gamma$.
    Using the integral representation
    \begin{equation}\label{eq:Abra.Stegun.1}
        \psi'(z) = \int_0^{\infty} \frac{te^{-(z - 1)t}}{e^t - 1} dt, \quad z > 0,
    \end{equation}
    see \cite[p.260]{MR0167642}, we obtain (take $t = s/M$ and $t = s/\gamma_i$)
    \begin{align}
        h''(a)
        &= - M^2 \psi'(aM + 1) + \sum_{i=1}^{d+1} \gamma_i^2 \psi'(a \gamma_i + 1)
        = - M^2 \int_0^{\infty} \frac{te^{-aM t}}{e^t - 1} dt + \sum_{i=1}^{d+1} \gamma_i^2 \int_0^{\infty} \frac{te^{-a\gamma_i t}}{e^t - 1} dt \notag \\
        &= - \int_0^{\infty} se^{-as} J_{\bb{\gamma}/M}(e^{s/M}) ds,
    \end{align}
    where $J_{\bb{u}}(y)$ is defined in \eqref{eq:lem:Alzer.2018.lemma.1.generalization}.
    Applying Lemma \ref{lem:Alzer.2018.lemma.1.generalization} gives
    \begin{equation}\label{eq:prop:completely.monotonic.almost.equation}
        (-1)^n h^{(n+1)}(a) = \int_0^{\infty} s^n e^{-as} J_{\bb{\gamma}/M}(e^{s/M}) ds > 0, \quad n\in \N, ~a > 0.
    \end{equation}
    If we show that $h'(a) > 0$ for $a > 0$, then $h'$ will be completely monotonic under Definition \ref{def:complete.monotonicity} and we will be able to conclude that $g$ is completely monotonic by Lemma \ref{lem:Alzer.2018.lemma.2}. Since $h'$ is decreasing (see \eqref{eq:prop:completely.monotonic.almost.equation} when $n=1$), we show that $\lim_{a\to\infty} h'(a) \geq 0$ to conclude the proof.

    If we apply the recurrence formula
    \begin{equation}\label{Abra.Stegun.2}
        \psi(z + 1) = \psi(z) + \frac{1}{z}, \quad z > 0,
    \end{equation}
    see \cite[p.258]{MR0167642}, we obtain from \eqref{eq:prop:completely.monotonic.derivative.h} the representation
    \begin{equation}\label{eq:prop:completely.monotonic.derivative.h.decomposition}
        h'(a) = \frac{d}{a} - M R(aM) + \sum_{i=1}^{d+1} \gamma_i R(a \gamma_i) + \sum_{i=1}^{d+1} \gamma_i \log\left(\frac{\gamma_i / M}{x_i}\right),
    \end{equation}
    where $R(z) \circeq \psi(z) - \log z$. Using the asymptotic formula
    \begin{equation}\label{eq:Abra.Stegun.3}
        \psi(z) \sim \log z - \frac{1}{2z} - \ldots \quad (\text{as } z\to \infty)
    \end{equation}
    see \cite[p.259]{MR0167642}, we conclude from \eqref{eq:prop:completely.monotonic.derivative.h.decomposition} and Jensen's inequality (for the convex function $-\log(\cdot)$ and the probability weights $P_i \circeq \gamma_i / M$ and $Q_i \circeq x_i$) that
    \begin{equation}\label{eq:prop:completely.end}
        \lim_{a\to\infty} h'(a) = M \sum_{i=1}^{d+1} \frac{\gamma_i}{M} \log\left(\frac{\gamma_i / M}{x_i}\right) \geq - M \log\left(\sum_{i=1}^{d+1} x_i\right) = 0.
    \end{equation}
    This ends the proof.
\end{proof}

\begin{remark}
    Interestingly, the sum on the left-end side of the inequality in \eqref{eq:prop:completely.end} is the Kullback-Leibler divergence $D_{\text{KL}}(P\|Q)$.
    It is well defined because of \eqref{eq:prop:completely.monotonic.hypothesis.k.i} and the fact that $\bb{x}\in \text{Int}(\mathcal{S})$ by hypothesis (which implies $0 < x_i < 1$ for all $i\in [d+1]$).
\end{remark}

\section{Some combinatorial inequalities}\label{sec:combinatorial.inequalities}

In the context of Theorem \ref{thm:completely.monotonic}, define
\begin{equation}\label{eq:multinomial.coefficients.general}
    C(a) \circeq \frac{\Gamma(aM + 1)}{\prod_{i=1}^{d+1} \Gamma(a \gamma_i + 1)}, \quad a\in (0,\infty).
\end{equation}
Below are three simple combinatorial inequalities for the multinomial coefficients in \eqref{eq:multinomial.coefficients.general}.
They generalize the ones proved in \cite{MR3730425} for binomial coefficients.

\begin{corollary}\label{cor:combinatorial.inequality}
    Let $k\in \N$ and let $a_j\in (0,\infty)$, $\lambda_j\in (0,1)$, $j\in \{1,2,\ldots,k\}$, be such that $\sum_{j=1}^k \lambda_j = 1$.
    The following inequalities hold :
    \begin{enumerate}[(a)]
        \item $C(\sum_{j=1}^k \lambda_j a_j) \leq \prod_{j=1}^k C(a_j)^{\lambda_j}$, where equality holds if and only if all the $a_j$'s are the same.\vspace{-0.8mm}
        \item $\prod_{j=1}^k C(a_j) < C(\sum_{j=1}^k a_j)$.
        \item If $a_1 \leq a_3$, then $C(a_1 + a_2) C(a_3) \leq C(a_1) C(a_2 + a_3)$, where equality holds if and only if $a_1 = a_3$.
    \end{enumerate}
\end{corollary}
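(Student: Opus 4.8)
The plan is to reduce all three inequalities to a single analytic fact, namely that $\varphi(a) \circeq \log C(a)$ is \emph{strictly} convex on $(0,\infty)$, which I would read off the proof of Theorem \ref{thm:completely.monotonic} rather than reprove from scratch. Comparing \eqref{eq:thm:completely.monotonic} with \eqref{eq:multinomial.coefficients.general} gives $\log g(a) = \varphi(a) + a\sum_{i=1}^{d+1}\gamma_i\log x_i$, so $\varphi$ and $\log g$ differ only by a linear term and hence share the same second derivative. That second derivative is already available: with $h = -\log g$ and $n = 1$ in \eqref{eq:prop:completely.monotonic.almost.equation},
\[
\varphi''(a) = (\log g)''(a) = -h''(a) = \int_0^\infty s\, e^{-as}\, J_{\bb{\gamma}/M}(e^{s/M})\, ds > 0,
\]
the positivity being exactly Lemma \ref{lem:Alzer.2018.lemma.1.generalization}. (As in the theorem, I would assume all $\gamma_i > 0$; if some $\gamma_i = 0$ the coefficient collapses to a smaller value of $d$.) Thus $\varphi$ is strictly convex, i.e.\ $C$ is strictly log-convex, and all three parts become standard consequences.

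Part (a) is then precisely Jensen's inequality applied to the strictly convex $\varphi$: one has $\varphi\big(\sum_{j}\lambda_j a_j\big) \le \sum_{j}\lambda_j \varphi(a_j)$ with equality iff all the $a_j$ coincide, which exponentiates to the stated bound and equality case. For part (b) I would first record the boundary value $\lim_{a\downarrow 0}\varphi(a) = \log\big(\Gamma(1)/\prod_{i}\Gamma(1)\big) = 0$ and extend $\varphi$ continuously to $[0,\infty)$ by $\varphi(0) \circeq 0$. A strictly convex function with $\varphi(0) = 0$ is strictly superadditive: for $s,t > 0$, writing $s = \tfrac{s}{s+t}(s+t) + \tfrac{t}{s+t}\cdot 0$ and symmetrically for $t$, strict convexity yields $\varphi(s) < \tfrac{s}{s+t}\varphi(s+t)$ and $\varphi(t) < \tfrac{t}{s+t}\varphi(s+t)$, whose sum gives $\varphi(s) + \varphi(t) < \varphi(s+t)$; iterating over the $k$ summands and exponentiating gives (b).

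For part (c) I would introduce $\Delta(t) \circeq \varphi(t + a_2) - \varphi(t)$ and note $\Delta'(t) = \varphi'(t + a_2) - \varphi'(t) > 0$, because $\varphi'$ is strictly increasing and $a_2 > 0$; hence $\Delta$ is strictly increasing. Since $a_1 \le a_3$, this gives $\Delta(a_1) \le \Delta(a_3)$, that is $\varphi(a_1 + a_2) - \varphi(a_1) \le \varphi(a_2 + a_3) - \varphi(a_3)$ with equality iff $a_1 = a_3$; rearranging and exponentiating is exactly (c).

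I do not expect a genuine obstacle here: once the strict convexity of $\varphi$ is in hand, everything is elementary. The only points requiring care are the bookkeeping of the equality cases (all driven by the strictness $\varphi'' > 0$ from Lemma \ref{lem:Alzer.2018.lemma.1.generalization}) and the degenerate situations — in particular, the strict inequality in (b) presumes $C$ is non-constant, equivalently that at least two of the $\gamma_i$ are positive, since a single nonzero $\gamma_i$ forces $C \equiv 1$ and would collapse (b) to a false strict inequality. I would flag that case explicitly and otherwise invoke the reduction to smaller $d$ already used in the proof of Theorem \ref{thm:completely.monotonic}.
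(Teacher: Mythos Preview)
Your proposal is correct and follows the same approach as the paper: both extract strict log-convexity from \eqref{eq:prop:completely.monotonic.almost.equation} with $n=1$ and then read off (a)--(c) as standard convexity consequences. The only difference is presentational---the paper cites Lemma~3 and Corollary~3 of \cite{MR3730425} for (b) and (c), whereas you unpack the superadditivity and monotone-difference arguments inline and work directly with $\log C$ (noting it differs from $\log g$ by a linear term); your flag of the degenerate single-positive-$\gamma_i$ case is a welcome observation that the paper leaves implicit.
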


\begin{proof}
    By \eqref{eq:prop:completely.monotonic.almost.equation} in the case $n = 1$, we know that $g$ in the statement of Theorem \ref{thm:completely.monotonic} is strictly log-convex,    which implies $(a)$ by definition. Point $(b)$ follows from Lemma 3 in \cite{MR3730425} because $g$ is differentiable on $[0,\infty)$, $g(0) = 1$ and $g$ is (strictly) positive, (strictly) decreasing and strictly log-convex on $(0,\infty)$. Point $(c)$ follows from a trivial adaptation of the proof of Corollary 3 in \cite{MR3730425} using \eqref{eq:prop:completely.monotonic.almost.equation}.
\end{proof}

\section{Application to Bernstein estimators on the simplex}\label{sec:application.Bernstein.polynomials.on.simplex}

In recent years, there has been a sustained interest in the study of statistical properties of Bernstein estimators on the unit hypercube, whether we talk about the cumulative distribution function (cdf) estimators
\begin{equation}\label{eq:Bernstein.cumulative.distribution.estimator.unit.hypercube}
    \hat{F}_{m,n}(\bb{x}) = \sum_{\bb{k}\in \N_0^d \cap [0,m]^d} F_n(\bb{k}/m) \prod_{i=1}^d {m \choose k_i} x_i^{k_i} (1 - x_i)^{k_i}, \quad \bb{x}\in [0,1]^d,
\end{equation}
where $F_n$ denotes the empirical cdf (given a random sample $\bb{y}_1, \bb{y}_2, \ldots, \bb{y}_n$ from an unknown cdf $F$), or the density estimators
\begin{equation}\label{eq:Bernstein.density.estimator.unit.hypercube}
    \hat{f}_{m,n}(\bb{x}) = m^d \sum_{\bb{k}\in \N_0^d \cap [0,m-1]^d} \PP_n\hspace{-0.5mm}\left(\Big(\frac{\bb{k}}{m},\frac{\bb{k}+\bb{1}}{m}\Big]\right) \prod_{i=1}^d {m-1 \choose k_i} x_i^{k_i} (1 - x_i)^{k_i}, \quad \bb{x}\in [0,1]^d,
\end{equation}
where $\PP_n$ denotes the empirical measure.
For more information, the reader is referred to
\cite{MR1910059}, \cite{MR2270097}, \cite{MR3474765}, \cite{MR3630225}, \cite{MR1873330}, \cite{MR3174309}, \cite{MR2769276}, \cite{MR2879763,MR3147339,MR3668546}, \cite{MR2345922}, \cite{MR2488150,MR2662607,MR2960952,MR2925964}, \cite{MR3412755}, \cite{MR1703623}, \cite{MR2153833}, \cite{MR1293514} and \cite{MR0397977}.

One clear advantage of Bernstein estimators over kernel estimators (for example) is that they generally perform better near the boundary, see e.g.\hspace{-0.3mm} \cite{MR2925964}.
To our knowledge, the statistical properties of Bernstein estimators on the simplex (see \eqref{eq:Bernstein.estimator}), and the associated density estimators, have never been studied in the literature, except in the univariate case where they coincide with \eqref{eq:Bernstein.cumulative.distribution.estimator.unit.hypercube} and \eqref{eq:Bernstein.density.estimator.unit.hypercube} above, and except for the pointwise mean squared error of the density estimator in \cite{MR1293514} when $d = 2$.
This subject is worth investigating because there are instances in practice where the distribution that we would like to estimate lives naturally on the $d$-dimensional simplex. One such example is the Dirichlet distribution, which is the conjugate prior of the multinomial distribution in Bayesian estimation, see e.g.\hspace{-0.3mm} \cite{Lange_1995} for an application in the context of allele frequency estimation in genetics. In those instances, we would expect that the estimators defined on the simplex perform better than the ones defined on the unit hypercube, especially near the boundary $\|\bb{x}\| = 1$.

Following \cite{MR2345922} and \cite{MR2662607}, define
\begin{equation*}
    S_{r,s,m}(\bb{x}) \circeq \sum_{\bb{k}\in \N_0^d : \|\bb{k}\| \leq m} P_{r\bb{k},rm}(\bb{x}) P_{s\bb{k},sm}(\bb{x}), \quad \bb{x}\in \mathcal{S},
\end{equation*}
for $r,s,m\in \N$.
This family of polynomials would arise in the context of statistical density estimation based on the Bernstein estimators in \eqref{eq:Bernstein.estimator} (see e.g.\hspace{-0.3mm} the appendix in \cite{MR2662607}).
Theorem \ref{thm:completely.monotonic} will be used to prove Proposition \ref{prop:lemma.4.Leblanc.2006.tech.report} below.

\newpage
The following lemma generalizes Theorem 1.1 (iii) in \cite{MR2345922}, and Lemma 3 $(ii)$ and $(iv)$ in \cite{MR2662607} when $j = 0$.

\begin{lemma}\label{lem:lemma.3.Leblanc.2006.tech.report}
    Let $d,r,s,m \in \N$, $\bb{x}\in \text{Int}(\mathcal{S})$, and define the covariance matrix
    \begin{equation}\label{eq:lem:lemma.3.Leblanc.2006.tech.report.covariance.matrix}
        \bb{\Sigma} \circeq rs(r+s)(\text{diag}(\bb{x}) - \bb{x}\bb{x}^T).
    \end{equation}
    We have
    \begin{equation*}
        m^{d/2} S_{r,s,m}(\bb{x}) = \phi_{r,s}(\bb{x}) + o_{\bb{x}}(1), \quad \text{as } m\to\infty,
    \end{equation*}
    where
    \begin{equation}\label{eq:lem:lemma.3.Leblanc.2006.tech.report.psi.function}
        \phi_{r,s}(\bb{x}) \circeq \frac{(\text{gcd}(r,s))^d}{(2\pi)^{d/2} (\det(\bb{\Sigma}))^{1/2}}.
    \end{equation}
\end{lemma}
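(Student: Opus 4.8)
The plan is to prove this by a multivariate local limit theorem (a de Moivre--Laplace expansion for the multinomial) applied to each of the two factors $P_{r\bb{k},rm}(\bb{x})$ and $P_{s\bb{k},sm}(\bb{x})$ separately, followed by an Euler--Maclaurin (Riemann-sum) passage from the sum over $\bb{k}$ to a Gaussian integral. Throughout I would write $\bb{\Sigma}_0 \circeq \mathrm{diag}(\bb{x}) - \bb{x}\bb{x}^T$, so that $\bb{\Sigma} = rs(r+s)\bb{\Sigma}_0$ and, by the matrix determinant lemma, $\det(\bb{\Sigma}_0) = (\prod_{i=1}^{d} x_i)(1-\|\bb{x}\|) = \prod_{i=1}^{d+1} x_i$. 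This last identity is what eventually turns the Gaussian normalising constant into $(\det \bb{\Sigma})^{-1/2}$.

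First I would localise the sum. Since $\bb{x}\in \mathrm{Int}(\mathcal{S})$, the multinomial weights $P_{r\bb{k},rm}(\bb{x})$ concentrate on indices with $\bb{k}/m$ within $O(m^{-1/2+\epsilon})$ of $\bb{x}$, and outside a window $\{\|\bb{k}-m\bb{x}\|\le m^{1/2+\epsilon}\}$ the summand is superpolynomially small. To make this tail bound uniform I would use the monotonicity and log-convexity of $a\mapsto P_{a\bb{k},am}(\bb{x})$ supplied by Theorem \ref{thm:completely.monotonic} (combined with Stirling), which dominate each factor and hence the product; this reduces $S_{r,s,m}(\bb{x})$, up to an $o(m^{-d/2})$ error, to a sum over the localised window.

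On that window I would invoke the uniform local CLT: $P_{r\bb{k},rm}(\bb{x}) = (2\pi rm)^{-d/2}(\det \bb{\Sigma}_0)^{-1/2}\exp(-\tfrac{r}{2m}(\bb{k}-m\bb{x})^T \bb{\Sigma}_0^{-1}(\bb{k}-m\bb{x}))(1+o(1))$, and likewise with $s$ in place of $r$. Multiplying the two expansions collapses the exponents into $\exp(-\tfrac{r+s}{2m}(\bb{k}-m\bb{x})^T\bb{\Sigma}_0^{-1}(\bb{k}-m\bb{x}))$, while the prefactors combine to $(2\pi m)^{-d}(rs)^{-d/2}(\det \bb{\Sigma}_0)^{-1}$. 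The summand is then a smooth Gaussian of width $\sim\sqrt{m}$ sampled on $\mathbb{Z}^d$, so the Euler--Maclaurin (equivalently Poisson-summation) error is negligible and $\sum_{\bb{k}}\to\int_{\R^d}$; the resulting integral evaluates to $(2\pi)^{d/2}(m/(r+s))^{d/2}(\det \bb{\Sigma}_0)^{1/2}$. Assembling the constants and using $\det \bb{\Sigma} = (rs(r+s))^d \det \bb{\Sigma}_0$ produces the $m^{-d/2}$ rate together with the leading coefficient $(2\pi)^{-d/2}(\det \bb{\Sigma})^{-1/2}$.

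The hard part, and what I expect to be the main obstacle, is the exact lattice bookkeeping. Each factor is the multinomial mass evaluated only on the sublattice $r\mathbb{Z}^d$ (resp.\ $s\mathbb{Z}^d$), not on the full integer lattice, and the way these two sublattices register against the unit lattice on which $\bb{k}$ runs contributes a purely combinatorial prefactor, namely the $(\gcd(r,s))^d$ appearing in \eqref{eq:lem:lemma.3.Leblanc.2006.tech.report.psi.function}. Tracking this factor correctly requires a finer argument than the leading Gaussian computation: I would write the sublattice indicator as a character average $r^{-d}\sum_{\bb{p}}\varphi_{rm}(\cdot/r + 2\pi\bb{p}/r)$ (with $\varphi_{rm}$ the multinomial characteristic function), apply Parseval, and identify precisely which frequency blocks survive as $m\to\infty$. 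To be safe I would pin down the constant by hand in the base case $d=1$ via Vandermonde-type binomial identities, and only then run the induction on $d$ already used for the earlier results in the paper.
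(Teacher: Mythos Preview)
Your route differs from the paper's. The paper does not expand the two factors separately and then pass to a Riemann sum; instead it introduces independent $\bb{U}_i\sim\text{Multinomial}(r,\bb{x})$ and $\bb{V}_i\sim\text{Multinomial}(s,\bb{x})$, sets $\bb{W}_i=s\bb{U}_i-r\bb{V}_i$, writes $S_{r,s,m}(\bb{x})=\PP\big(\sum_{i=1}^m \bb{W}_i=\bb{0}\big)$, and applies a single multivariate lattice local CLT. The factor $(\text{gcd}(r,s))^d$ then appears as the $d$-th power of the span of $\bb{W}_i$. This is considerably shorter than your plan and bypasses both the product of two expansions and the Euler--Maclaurin step.

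There is, however, a genuine issue you should be aware of before you spend effort on your ``hard part''. The paper's identity $S_{r,s,m}(\bb{x})=\PP\big(s\sum\bb{U}_i=r\sum\bb{V}_i\big)$ is only correct when $\text{gcd}(r,s)=1$: the event $\{s\sum\bb{U}_i=r\sum\bb{V}_i\}$ forces merely $\sum\bb{U}_i\in(r/\text{gcd}(r,s))\,\mathbb{Z}^d$, not $\sum\bb{U}_i\in r\mathbb{Z}^d$ as the definition of $S_{r,s,m}$ requires. For instance, at $d=1$, $r=s=2$, $m=1$ one has $S_{2,2,1}(x)=(1-x)^4+x^4$, whereas $\PP(U_1=V_1)=(1-x)^4+4x^2(1-x)^2+x^4$. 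In fact the stated limit is off exactly by the factor $(\text{gcd}(r,s))^d$; the correct limit of $m^{d/2}S_{r,s,m}(\bb{x})$ is $(2\pi)^{-d/2}(\det\bb{\Sigma})^{-1/2}$ for every $r,s$ (for $r=s$ one can also see this from $S_{r,r,m}(\bb{x})=\sum_{\bb{j}\in r\mathbb{Z}^d}P_{\bb{j},rm}(\bb{x})^2\sim r^{-d}S_{1,1,rm}(\bb{x})$, the summand being smooth on scale $\sqrt{m}$). Your Gaussian--Riemann computation already produces this correct constant. The lattice bookkeeping you propose to recover $(\text{gcd}(r,s))^d$ would therefore not succeed: the characteristic function of $\text{Multinomial}(rm,\bb{x})$ has no extra peaks on the dual of $r\mathbb{Z}^d$, because its support is the full integer simplex, and a careful Poisson-summation argument will simply confirm that no nontrivial frequency blocks survive. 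The only point in your outline that genuinely needs attention is the uniformity of the two local-CLT remainders over the window of radius $m^{1/2+\epsilon}$, which your localization already handles.
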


\begin{proof}
    Let $\bb{U}_1,\ldots,\bb{U}_m$ and $\bb{V}_1,\ldots,\bb{V}_m$ be two (independent) sequences of independent random vectors such that $\bb{U}_i \sim \text{Multinomial}(r,\bb{x})$ and $\bb{V}_i \sim \text{Multinomial}(s,\bb{x})$ for each $i\in [d]$.
    Now, let $\bb{H} \circeq \text{gcd}(r,s) \bb{I}_d$ where $\bb{I}_d$ is the $d\times d$ identity matrix, and define $\bb{W}_i \circeq s \bb{U}_i - r \bb{V}_i$ so that the $j$-th component of $\bb{W}_i$ has a lattice distribution with span $H_{jj} = \text{gcd}(r,s)$.
    Note that $\bb{W}_i^{\star} \circeq \bb{H}^{-1} \bb{W}_i$ has span $1$ in all $d$ directions.
    The covariance matrix of $\bb{W}_i$ is given by $\bb{\Sigma}$ in \eqref{eq:lem:lemma.3.Leblanc.2006.tech.report.covariance.matrix}.
    We can write $S_{r,s,m}(\bb{x})$ in terms of the $\bb{W}_i^{\star}$'s as
    \begin{equation*}
        S_{r,s,m}(\bb{x}) = \PP\left(\sum_{i=1}^m s \bb{U}_i = \sum_{i=1}^m r \bb{V}_i\right) = \PP\left(\sum_{i=1}^m \bb{W}_i^{\star} = \bb{0}\right).
    \end{equation*}
    Therefore, using Theorem 3.1 of \cite{MR3475488} (a local central limit theorem for random vectors with lattice distributions), $\det(\bb{H}) = (\text{gcd}(r,s))^d$ and the fact that the covariance matrix of $\bb{W}_i^{\star}$ is equal to $\bb{H}^{-1} \bb{\Sigma} \bb{H}^{-1}$, we obtain the conclusion.
\end{proof}

The following proposition generalizes Lemma 4 in \cite{MR2662607} when $j = 0$.

\begin{proposition}\label{prop:lemma.4.Leblanc.2006.tech.report}
    Let $r,s,m \in \N$ and let $h : \mathcal{S}\to \R$ be any bounded measurable function. As $m\to\infty$,
    \begin{enumerate}[(a)]
        \item $m^{d/2} \int_{\mathcal{S}} S_{r,s,m}(\bb{x}) d\bb{x} = \frac{2^{-d} \sqrt{\pi}}{\Gamma(d/2 + 1/2)} + O(m^{-1}) = \int_{\mathcal{S}} \phi_{r,s}(\bb{x}) d\bb{x} + O(m^{-1})$,
        \item $\int_{\mathcal{S}} h(\bb{x}) (m^{d/2} S_{r,s,m}(\bb{x}) - \phi_{r,s}(\bb{x})) d\bb{x} = o(1)$.
    \end{enumerate}
\end{proposition}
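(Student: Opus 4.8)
The plan is to leverage the pointwise asymptotic already established in Lemma~\ref{lem:lemma.3.Leblanc.2006.tech.report}, namely $m^{d/2} S_{r,s,m}(\bb{x}) \to \phi_{r,s}(\bb{x})$ for each fixed $\bb{x}\in\text{Int}(\mathcal{S})$, and to promote it to an integrated statement. The central issue in both parts is the interchange of limit and integral, which cannot be done naively because $\phi_{r,s}(\bb{x})$ blows up like $(\det\bb{\Sigma})^{-1/2}$ near the boundary $\partial\mathcal{S}$, where $\det(\text{diag}(\bb{x})-\bb{x}\bb{x}^T)=\prod_{i=1}^{d+1}x_i \to 0$. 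So I would first verify that $\phi_{r,s}$ is genuinely integrable on $\mathcal{S}$: since $\det\bb{\Sigma} = (rs(r+s))^d \prod_{i=1}^{d+1} x_i$, the function $\phi_{r,s}(\bb{x})$ behaves like $\big(\prod_{i=1}^{d+1} x_i\big)^{-1/2}$, and the integral $\int_{\mathcal{S}} \prod_{i=1}^{d+1} x_i^{-1/2}\,d\bb{x}$ is a Dirichlet integral equal to $\Gamma(1/2)^{d+1}/\Gamma((d+1)/2)$ up to the Jacobian; carrying out this Dirichlet computation is exactly what produces the closed form $\frac{2^{-d}\sqrt{\pi}}{\Gamma(d/2+1/2)}$ asserted as $\int_{\mathcal{S}}\phi_{r,s}(\bb{x})\,d\bb{x}$ in part~(a). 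This also settles the second equality in (a).

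For the uniform control I would exploit complete monotonicity from Theorem~\ref{thm:completely.monotonic}. Writing $S_{r,s,m}(\bb{x}) = \sum_{\bb{k}} P_{r\bb{k},rm}(\bb{x}) P_{s\bb{k},sm}(\bb{x})$, each factor $P_{a\bb{k},am}(\bb{x})$ is completely monotonic in $a$, hence in particular log-convex, which by Corollary~\ref{cor:combinatorial.inequality}(a) gives $P_{r\bb{k},rm}(\bb{x})P_{s\bb{k},sm}(\bb{x}) \le P_{(r+s)\bb{k}/2,\,(r+s)m/2}(\bb{x})^{?}$-type bounds. More usefully, I expect the clean route is to bound $S_{r,s,m}$ by a single multinomial-type sum and then estimate it directly. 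The key quantitative input for the $O(m^{-1})$ rate in part~(a) is to integrate the local CLT expansion of Lemma~\ref{lem:lemma.3.Leblanc.2006.tech.report} term by term, keeping the next-order correction in the Edgeworth-type expansion of Theorem~3.1 of \cite{MR3475488}; the first correction integrates to zero by symmetry and the remainder is $O(m^{-d/2-1})$ pointwise, which after multiplying by $m^{d/2}$ and integrating over the bulk yields $O(m^{-1})$.

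The main obstacle is the boundary layer. The pointwise asymptotic degrades as $\bb{x}\to\partial\mathcal{S}$, so I would split $\mathcal{S}$ into an interior region $\mathcal{S}_\delta \circeq \{\bb{x}: x_i \ge \delta \text{ for all } i\}$ and a shrinking boundary collar $\mathcal{S}\setminus\mathcal{S}_\delta$. On $\mathcal{S}_\delta$ the convergence $m^{d/2}S_{r,s,m}\to\phi_{r,s}$ is uniform with a controlled rate, so that piece contributes the main term plus $O(m^{-1})$. On the collar I would bound both $m^{d/2}S_{r,s,m}(\bb{x})$ and $\phi_{r,s}(\bb{x})$ by integrable majorants whose integrals over the collar are $o(1)$ as $\delta\to 0$; for $\phi_{r,s}$ this is the Dirichlet tail estimate, and for $S_{r,s,m}$ I would need a uniform-in-$m$ bound of the form $m^{d/2}S_{r,s,m}(\bb{x}) \le C\,\phi_{r,s}(\bb{x})$ (or $\le C\prod_i x_i^{-1/2}$), which should again follow from the local CLT with explicit constants or from a direct multinomial moment estimate. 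Choosing $\delta=\delta(m)\to 0$ slowly then balances the two error sources. Part~(b) follows the same interior/collar split: on $\mathcal{S}_\delta$ the integrand $h(\bb{x})(m^{d/2}S_{r,s,m}-\phi_{r,s})$ is uniformly $o(1)$ and $h$ is bounded, while on the collar the uniform majorant times $\|h\|_\infty$ has integral $o(1)$, so by dominated convergence the whole integral is $o(1)$; no rate is claimed in~(b), so this is the softer of the two and serves mainly as a check that the dominating-function strategy is sound.
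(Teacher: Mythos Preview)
Your overall strategy (pointwise local CLT plus a domination/interior--collar argument) is plausible but diverges from the paper's route, and the step you flag as ``should follow'' is precisely the one the paper works hardest to avoid.

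The paper does \emph{not} try to produce a uniform-in-$m$ majorant $m^{d/2}S_{r,s,m}(\bb{x})\le C\prod_i x_i^{-1/2}$, nor does it extract the $O(m^{-1})$ rate from an Edgeworth expansion. Instead it computes $\int_{\mathcal{S}}S_{1,1,m}(\bb{x})\,d\bb{x}$ \emph{exactly}: after the Dirichlet integral, the sum $\sum_{\|\bb{k}\|\le m}\prod_{i=1}^{d+1}\binom{2k_i}{k_i}$ is evaluated in closed form via the identity $\binom{2k}{k}=(-4)^k\binom{-1/2}{k}$ and Chu--Vandermonde, then the duplication formula gives a ratio of gamma functions whose asymptotic expansion yields the $O(m^{-1})$ rate for free. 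No boundary splitting is needed at this stage. The Dirichlet computation of $\int_{\mathcal{S}}\phi_{r,s}$ you describe is correct and matches the paper.

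For general $r,s$ and for part~(b), the paper uses a Scheff\'e-type argument rather than an explicit dominating function: the pointwise convergence from Lemma~\ref{lem:lemma.3.Leblanc.2006.tech.report} together with the convergence of the integrals just established for $r=s=1$ implies that $\{m^{d/2}S_{1,1,m}\}_m$ is uniformly integrable. Then complete monotonicity is used, but not via log-convexity as you attempt: the point is simply that $a\mapsto P_{a\bb{k},am}(\bb{x})$ is \emph{decreasing}, so $P_{r\bb{k},rm}P_{s\bb{k},sm}\le P_{\bb{k},m}^2$ and hence $S_{r,s,m}\le S_{1,1,m}$. This transfers uniform integrability to $\{m^{d/2}S_{r,s,m}\}_m$, and (b) follows from uniform integrability plus a.e.\ convergence.

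Your proposal has two genuine gaps. First, the uniform pointwise bound $m^{d/2}S_{r,s,m}(\bb{x})\le C\prod_i x_i^{-1/2}$ is asserted but not argued; local CLT bounds typically degrade as $\bb{x}\to\partial\mathcal{S}$, so ``explicit constants'' do not automatically give uniformity there, and a direct multinomial estimate of this strength is essentially the whole difficulty. Second, your Edgeworth route to the $O(m^{-1})$ rate requires integrating a correction term whose implicit constants blow up at the boundary; balancing the collar width $\delta(m)$ against this will generically give only $o(1)$, not $O(m^{-1})$. The paper sidesteps both issues by the exact computation for $r=s=1$ and the monotone comparison $S_{r,s,m}\le S_{1,1,m}$.
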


\begin{proof}
    Assume for now that $r = s = 1$.
    We have
    \begin{align}\label{eq:prop:lemma.4.Leblanc.2006.tech.report.beginning}
        \int_{\mathcal{S}} S_{1,1,m}(\bb{x}) d\bb{x}
        &= \sum_{\|\bb{k}\| \leq m} \int_{\mathcal{S}} (P_{\bb{k},m}(\bb{x}))^2 d\bb{x}
        = \sum_{\|\bb{k}\| \leq m} \left(\frac{\Gamma(m + 1)}{\prod_{i=1}^{d+1} \Gamma(k_i + 1)}\right)^2 \int_{\mathcal{S}} \, \prod_{i=1}^{d+1} x_i^{2k_i} d\bb{x} \notag \\
        &= \sum_{\|\bb{k}\| \leq m} \left(\frac{\Gamma(m + 1)}{\prod_{i=1}^{d+1} \Gamma(k_i + 1)}\right)^2 \frac{\prod_{i=1}^{d+1} \Gamma(2k_i + 1)}{\Gamma(2m + d + 1)} \notag \\
        &= \frac{(\Gamma(m + 1))^2}{\Gamma(2m + d + 1)} \sum_{\|\bb{k}\| \leq m} \prod_{i=1}^{d+1} {2 k_i \choose k_i}.
    \end{align}
    To obtain the third equality, we used the normalization constant for the Dirichlet distribution. Note that
    \begin{align}\label{eq:prop:lemma.4.Leblanc.2006.tech.report.Graham.formula}
        \sum_{\|\bb{k}\| \leq m} \prod_{i=1}^{d+1} {2 k_i \choose k_i}
        &= (-4)^m \sum_{\|\bb{k}\| \leq m} \prod_{i=1}^{d+1} \frac{1}{(-4)^m} {2 k_i \choose k_i}
        = (-4)^m \sum_{\|\bb{k}\| \leq m} \prod_{i=1}^{d+1} {-1/2 \choose k_i} \notag \\
        &= (-4)^m {-(d+1)/2 \choose m} \notag \\
        &= {m + \frac{d-1}{2} \choose m} 4^m,
    \end{align}
    where the last three equalities follow, respectively, from (5.37), the Chu-Vandermonde convolution (p.\hspace{-1mm} 248), and (5.14) in \cite{MR1397498}.
    By applying \eqref{eq:prop:lemma.4.Leblanc.2006.tech.report.Graham.formula} and the duplication formula
    \begin{equation}\label{eq:prop:lemma.4.Leblanc.2006.tech.report.duplication.formula}
        4^y = \frac{2 \sqrt{\pi} \, \Gamma(2y)}{\Gamma(y) \Gamma(y + 1/2)}, \quad y\in (0,\infty),
    \end{equation}
    see \cite[p.256]{MR0167642}, in \eqref{eq:prop:lemma.4.Leblanc.2006.tech.report.beginning}, we get
    \begin{align*}
        \int_{\mathcal{S}} S_{1,1,m}(\bb{x}) d\bb{x}
        &= \frac{(\Gamma(m + 1))^2}{\Gamma(2m + d + 1)} \cdot \frac{\Gamma(m + d/2 + 1/2)}{\Gamma(m + 1)\Gamma(d/2 + 1/2)} \cdot 4^m \\
        &= \frac{2 \sqrt{\pi} \, \Gamma(m + 1)}{\Gamma(d/2 + 1/2) \Gamma(m + d/2 + 1)} \cdot \frac{\Gamma(m + d/2 + 1/2) \Gamma(m + d/2 + 1)}{2 \sqrt{\pi} \, \Gamma(2m + d + 1)} \cdot 4^m \\
        &= \frac{2 \sqrt{\pi} \, \Gamma(m + 1)}{\Gamma(d/2 + 1/2) \Gamma(m + d/2 + 1)} \cdot \frac{4^m}{4^{m + d/2 + 1/2}} \\
        &= \frac{2^{-d} \sqrt{\pi} \, \Gamma(m + 1)}{\Gamma(d/2 + 1/2) \Gamma(m + d/2 + 1)} \\
        &= \left\{\hspace{-1mm}
        \begin{array}{ll}
            \frac{2^{-d} \sqrt{\pi}}{\Gamma(d/2 + 1/2)} \prod_{i=1}^{d/2} (m + i)^{-1}, &\mbox{if } d ~\text{is even}, \\[2mm]
            \frac{2^{-d} \sqrt{\pi}}{\Gamma(d/2 + 1/2)} \prod_{i=1}^{d/2+1/2} (m + d/2 + 1 - i)^{-1} \cdot \frac{\Gamma(m + 1)}{\Gamma(m + 1/2)}, &\mbox{if } d ~\text{is odd}. \\
        \end{array}
        \right.
    \end{align*}
    Using the fact that
    \begin{equation}
        \frac{\Gamma(m + 1)}{m^{1/2} \Gamma(m + 1/2)} = 1 + \frac{1}{8m} + O(m^{-2}),
    \end{equation}
    see \cite[p.257]{MR0167642}, we obtain
    \begin{equation}\label{prop:lemma.4.Leblanc.2006.tech.report.S.m.asymptotic}
        m^{d/2} \int_{\mathcal{S}} S_{1,1,m}(\bb{x}) d\bb{x} = \frac{2^{-d} \sqrt{\pi}}{\Gamma(d/2 + 1/2)} + O(m^{-1}).
    \end{equation}
    In the case $r=s=1$, the expression for $\bb{\Sigma}$ in \eqref{eq:lem:lemma.3.Leblanc.2006.tech.report.covariance.matrix}
    is equal to $2 (\text{diag}(\bb{x}) - \bb{x}\bb{x}^T)$. Using the square-root-free symbolic Cholesky decomposition for covariance matrices of multinomial distributions (see Theorem 1 in \cite{MR1157720}), we deduce that $\det(\bb{\Sigma}) = 2^d \det(\text{diag}(\bb{x}) - \bb{x}\bb{x}^T) = 2^d \prod_{i=1}^{d+1} x_i$.
    Therefore,
    \begin{align}
        \int_{\mathcal{S}} \frac{1}{(2\pi)^{d/2} (\det(\bb{\Sigma}))^{1/2}} d\bb{x}
        &= \frac{1}{2^d \pi^{d/2}} \int_{\mathcal{S}} \prod_{i=1}^{d+1} x_i^{1/2 - 1} d\bb{x} \notag \\
        &= \frac{1}{2^d \pi^{d/2}} \cdot \frac{(\Gamma(1/2))^{d+1}}{\Gamma(d/2 + 1/2)} \notag  \\
        &= \frac{2^{-d} \sqrt{\pi}}{\Gamma(d/2 + 1/2)}.
    \end{align}
    Together with \eqref{prop:lemma.4.Leblanc.2006.tech.report.S.m.asymptotic} and \eqref{eq:lem:lemma.3.Leblanc.2006.tech.report.psi.function}, this proves $(a)$ for $r = s = 1$.

    Now, the almost-everywhere convergence from Lemma \ref{lem:lemma.3.Leblanc.2006.tech.report} and the mean convergence from $(a)$ imply that $\{S_{1,1,m}(\cdot)\}_{m\in \N}$ is uniformly integrable, see \cite[p.189]{MR1368405}.
    By Theorem \ref{thm:completely.monotonic}, $a\mapsto P_{a\bb{k},am}$ is decreasing on $(0,\infty)$, so
    \begin{equation}
        S_{r,s,m}(\bb{x}) \leq \sum_{\|\bb{k}\| \leq m} (P_{\bb{k},m}(\bb{x}))^2 = S_{1,1,m}(\bb{\bb{x}}),
    \end{equation}
    which implies that $\{S_{r,s,m}(\cdot)\}_{m\in \N}$ is also uniformly integrable.
    Hence, by Lemma \ref{lem:lemma.3.Leblanc.2006.tech.report}, we must have $(a)$ in the general case $r,s\in \N$.
    Finally, the almost-everywhere convergence and the uniform integrability imply the $L^1$ convergence, so $(b)$ follows immediately from Jensen's inequality and the fact that $h$ is bounded.
\end{proof}

\section*{Acknowledgements}

I would like to thank Alexandre Leblanc for promptly giving me access to \cite{Leblanc_2006_tech_report}.

%
%


\bibliographystyle{authordate1}
\bibliography{Ouimet_2018_multinomial_proba_bib}

\end{document}